\newcommand{\tiltfrac}[2]{\left.#1\middle/#2\right.}
\newtheorem{definition}{Definition}[section]
\newtheorem{lemma}[definition]{Lemma}
\newtheorem{theorem}[definition]{Theorem}
\newtheorem{corollary}[definition]{Corollary}
\newtheorem{observation}[definition]{Observation}
\newtheorem{problem}{Problem}
\newtheorem{conjecture}[definition]{Conjecture}
\newcommand{\avd}{\text{\rm{avd}}}
\newcommand{\A}[3][\mathcal{D}_G]{\mathcal{A}_{#3}(#2, #1)}
\newcommand{\N}[3][\mathcal{D}_G]{\mathcal{N}_{#3}(#2, #1)}
\newcounter{eqcount}
\renewcommand{\pod}[1]{\mathchoice
  {\allowbreak \if@display \mkern 18mu\else \mkern 8mu\fi (#1)}
  {\allowbreak \if@display \mkern 18mu\else \mkern 8mu\fi (#1)}
  {\mkern4mu(#1)}
  {\mkern4mu(#1)}
}
\tikzset{bignode/.style={minimum size=3em,}}
\begin{document}

\title{A Tight Upper Bound on the Average Order of Dominating Sets of a Graph}
\author{Iain Beaton\\
\small Department of Mathematics \& Statistics\\
\small Acadia University\\
\small Wolfville, NS Canada\\
\small iain.beaton@acadiau.ca\\
\and
Ben Cameron\\
\small Department of Computing Science\\
\small The King's University\\
\small Edmonton, AB Canada\\
\small ben.cameron@kingsu.ca\\
}

\date{\today}

\maketitle

\begin{abstract}
In this paper we study the average order of dominating sets in a graph, $\avd(G)$. Like other average graph parameters, the extremal graphs are of interest. Beaton and Brown (2021) 
conjectured that for all graphs $G$ of order $n$ without isolated vertices, $\avd(G) \leq 2n/3$. Recently, Erey (2021) proved the conjecture for forests without isolated vertices. In this paper we prove the conjecture and classify which graphs have $\avd(G) = 2n/3$. We also use our bounds to prove the average version of Vizing's Conjecture.\\

\noindent\textbf{Keywords:} dominating set, domination polynomial, average graph parameters.
\end{abstract}

\section{Introduction}\label{sec:intro}
The study of average graph parameters is often traced back to the introduction of the Wiener index of a graph by Wiener~\cite{1947Wiener} in 1947 due to its close connection to the average distance of a graph. These parameters have fascinating applications to chemistry~\cite{2002Rouvray}, but even without the chemical applications, average graph parameters have proven important for the study of structural graph theory. Some of the most natural questions to ask about any average graph parameter is which graphs maximize and minimize it and what are its maximum and minimum values among all graphs of the same order. While the former question can (but certainly does not always) have predictable answers, proving them is often notoriously difficult. Doyle and Graver~\cite{1977Doyle} showed for connected graphs of order $n$ the average distance in a graph was maximized by a path, with value $\tiltfrac{(n+1)}{3}$, and minimized by the complete graph with value $1$. In the three quarters of a century since the Wiener index was introduced, many other average graph parameters have been introduced and attracted significant interest in the literature.

In a series of seminal papers \cite{1983Jamison,1984Jamison}, Jamison introduced the mean subtree order of a tree. In these works, it was shown that the mean subtree order of trees is minimized by the path and conjectured that the tree maximizing it must be a caterpillar. The mean subtree order has seen a resurgence of interest in recent years, especially in the so-called Caterpillar Conjecture (see e.g., \cite{2019Mol, 2015Wagner, 2021Cambie, 2010Vince}). Generalizing the mean subtree order from trees to graphs was only recently done by Chin et al.~\cite{2018ChinGordonMacpheeVincent}, where, among other things, a conjecture was posed that would have implied that the path minimized and the complete graph maximized the mean subtree order of connected graphs.  Although this specific conjecture was recently was disproved~\cite{2021Cameron}, it was replaced by a weaker version that would still imply that the complete graph maximizes the mean subtree order of a graph. In another generalization of Jamison's work to all graphs, Kroeker, Mol and Oellermann~\cite{2018Kroeker} introduced the mean connected induced subgraph order of a graph and conjectured that the path minimizes it. While this was very recently proved independently by Haslegrave~\cite{2022Haslegrave_pathminimizes} and Vince~\cite{2022Vince}, the question of which graph maximizes the mean connected induced subgraph order for connected graphs of order $n$ remains open for all $n\ge 10$ ($n\le 9$ is classified in the conclusion of~\cite{2018Kroeker} where the difficulty of this problem is noted). The mean connected induced subgraph order of a graph continues to provoke interest when further restrictions are placed on the graphs~\cite{2020Balodis, 2022Haslegrave_withdegconstraints}. The average size of independent sets was introduced by Davies, Jenssen, Perkins and Roberts~\cite{2017Davies} (see also~\cite{2018Davies}) as the logarithmic derivative of the independence polynomial, motivated by applications to statistical mechanics. Andriantiana, Razanajatovo Misanantenaina and Wagner~\cite{2020Andriantiana_indep} showed that for graphs of a fixed order, the empty graph maximizes and the complete graph minimizes the average size of independent sets. Closely related is the average size of matchings introduced again by Andriantiana, Razanajatovo Misanantenaina and Wagner~\cite{2020Andriantiana_matchings} where it was conversely shown that the empty graph minimizes and the complete graph maximizes.

This paper is focused on the average order of dominating sets of a graph $G$ (denoted $\avd(G)$). The first author and Brown~\cite{2021Beaton} introduced $\avd(G)$ and showed that 

$$\frac{n\cdot 2^{n-1}}{2^n-1}=\avd(K_n)\le \avd(G)\le \avd(\overline{K_n})=n $$

\noindent for all graphs $G$ of order $n$. Additionally, they showed if $G$ had no isolated vertices then $\avd(G) \le 3n/4$ but conjectured this bound could be improved.

\begin{conjecture}[\cite{2021Beaton}]
\label{conj:2n/3}
 Let $G$ be a graph with $n \geq 2$ vertices. If $G$ has no isolated vertices then $\avd(G) \leq \frac{2n}{3}$.
\end{conjecture}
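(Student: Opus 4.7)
The plan is to reduce to connected graphs and then induct on the order. Because the domination polynomial is multiplicative over disjoint unions, $D(G,x)=\prod_i D(G_i,x)$ over the connected components $G_i$, one has $\avd(G)=\sum_i\avd(G_i)$ and $n=\sum_i|V(G_i)|$. Thus both the inequality $\avd(G)\le 2n/3$ and the classification of equality cases reduce at once to the corresponding statement for connected components: equality holds for $G$ if and only if it holds for every component.

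Direct enumeration of the small connected graphs on $m\in\{2,3,4\}$ vertices shows that $P_2$, $P_3$ and $P_4$ each attain $\avd=2m/3$ exactly, while every other connected graph of order at most $4$ (for example $K_3$, $K_{1,3}$, $C_4$, $K_4-e$, $K_4$) lies strictly below the bound. This strongly suggests that the extremal connected graphs are exactly $\{P_2,P_3,P_4\}$, and therefore that the global extremal graphs are precisely the disjoint unions of copies of these three paths. These small cases also serve as the base of the induction.

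For the induction step on a connected graph $H$ of order $m\ge 5$ with a leaf $v$ and support vertex $u$, I would partition the dominating sets of $H$ into three classes $A=\{S:u,v\in S\}$, $B=\{S:v\in S,\ u\notin S\}$, and $C=\{S:u\in S,\ v\notin S\}$; every dominating set of $H$ lies in one of these classes because the only way to dominate the leaf $v$ is to have $v$ or $u$ in the set. The map $S\mapsto S\setminus\{v\}$ is a size-decreasing bijection $A\to C$, the sets in $C$ are exactly the dominating sets of $H-v$ that contain $u$, and the sets in $B$ are exactly the dominating sets of $H-u$ (in which $v$ is an isolated vertex, hence forced to appear). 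These identifications give recursions expressing $D(H,1)$ and $D'(H,1)$ in terms of analogous quantities for $H-v$ and $H-u$; applying the inductive hypothesis to these smaller graphs should establish $3D'(H,1)\le 2m\,D(H,1)$, with the equality analysis forcing $H\in\{P_2,P_3,P_4\}$.

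The main obstacle I foresee is twofold. First, $H-u$ typically has isolated vertices (at least $v$) for which the bare bound $\avd\le 2|V|/3$ does not apply, so the inductive hypothesis must be formulated in a form that accommodates isolates---for instance by tracking them explicitly in the bound or by working with the restricted polynomial that counts dominating sets containing the specified vertex $u$, so that the algebra in the inductive step closes. Second, the leaf-based reduction is unavailable when $H$ has minimum degree at least $2$, and a separate argument is needed in that regime; a natural route is to establish a strictly stronger bound via a weighted-counting argument on $\sum_S(2m-3|S|)$ that exploits the abundance of small dominating sets forced by $\delta(H)\ge 2$, or by analyzing the vertex-inclusion probabilities of a uniformly random dominating set.
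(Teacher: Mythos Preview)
Your proposal has a substantive error in the equality characterization and, as you yourself note, two unresolved gaps in the inductive argument.

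\textbf{The equality characterization is wrong.} You conjecture, based on $m\le 4$, that the only connected graphs attaining $\avd(G)=2m/3$ are $P_2,P_3,P_4$. This already fails at $m=5$: take $K_2$ and attach two leaves to one endpoint and one leaf to the other (the tree with degree sequence $3,2,1,1,1$). A direct count gives $15$ dominating sets with total size $50$, so $\avd=50/15=10/3=2\cdot 5/3$. In fact the extremal graphs are exactly the \emph{star-like} graphs---those obtained from an arbitrary base graph by attaching one or two pendant leaves to every vertex---and connected star-like graphs exist at every order $n\ge 2$ (three of them at $n=6$, for instance). Your enumeration stopped one step too early. Consequently the clause ``the equality analysis forcing $H\in\{P_2,P_3,P_4\}$'' in your inductive step cannot hold as written, and any corrected induction would have to track a much richer family of tight examples.

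\textbf{The acknowledged gaps are the heart of the problem, not peripheral.} For the $\delta(H)\ge 2$ regime you propose ``a weighted-counting argument on $\sum_S(2m-3|S|)$'' without a mechanism; this \emph{is} the content of the theorem in that case. The paper handles it by rewriting the bound as $\sum_{S}|a(S)|\le\sum_{S}|N(S)|$ (with $a(S)$ the set of critical vertices of $S$) and then, for each vertex $v$ of degree $\ge 2$, producing an explicit injection that yields $(2^{\deg(v)}-\deg(v)-1)\cdot|\mathcal A_2(v,\mathcal D_G)|<|\mathcal N_2(v,\mathcal D_G)|$. For graphs with leaves, the paper does \emph{not} induct at all: it partitions $\mathcal D_G$ according to which stems $s$ satisfy $L[s]\subseteq S$, and treats each stem together with its leaves as a single block via another injection. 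This global double-counting sidesteps both of your obstacles at once---no smaller graph with isolated vertices ever enters the argument.

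Your leaf-deletion recursion is the natural first move and underlies Erey's proof for forests, but promoting it to all graphs would require a strengthened inductive hypothesis that tolerates isolates \emph{and} an independent argument for $\delta\ge 2$; at that point you are essentially rebuilding the paper's machinery in a less direct form.
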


\noindent This improved upper bound was shown to hold for forests without isolated vertices by Erey~\cite{2021Erey} who also gave a structural characterization for the forests achieving the upper bound. 
In this paper we prove Conjecture~\ref{conj:2n/3} and completely classify all graphs that achieve $\avd(G)=2n/3$.

This result comes in Section~\ref{sec:avdnorm} and allows for a short proof of an average version of Vizing's Conjecture that we prove in Section~\ref{sec:Vizing}. We conclude by posing some open problems and ideas for future research in Section~\ref{sec:conclusion} including a connection to the unimodality conjecture for domination polynomials. We first include a brief subsection including many of the definitions and notations used, although more substantial technical definitions will be stated as relevant throughout.

\subsection{Definitions and Notation}\label{subsec:defs}
For a graph $G$ containing vertex $v$, $N_G(v) = \{u|uv \in E(G)\}$ denotes the \emph{open neighbourhood} of $v$ while $N_G[v] = N(v) \bigcup \{v\}$ denotes the \emph{closed neighbourhood} of $v$ (we will omit the subscript $G$ when only referring to one graph). The \textit{degree} of a vertex $v$, denoted $\deg(v)$ is defined at $|N(v)|$ and the \textit{minimum degree} of a graph $G$, denoted $\delta(G)$ is $\min_{v\in V(G)}\deg(v)$.  For $S \subseteq V(G)$, the closed neighbourhood $N[S]$ of $S$ is simply the union of the closed neighbourhoods for each vertex in $S$. A \textit{leaf} is a vertex with degree $1$, a \textit{stem} is a vertex adjacent to a leaf, and a \textit{$k$-stem} is a vertex adjacent to exactly $k$ leaves. A subset of vertices $S$ is a \emph{dominating set} of $G$ if $N[S] = V(G)$, that is, every vertex is either in $S$ or adjacent to a vertex in $S$. The \emph{domination number} of $G$, denoted $\gamma(G)$, is the order of the smallest dominating set of $G$.
Let $\mathcal{D}_G$ denote the collection of dominating sets of $G$. Furthermore let $d_k(G) = |\{S \in \mathcal{D}_G: |S|=k\}|$. Then the \emph{average order of dominating sets} in $G$, denoted $\avd(G)$, is
$$\avd(G)= \frac{\sum\limits_{k=\gamma(G)}^{|V(G)|} kd_k(G)}{\sum\limits_{k=\gamma(G)}^{|V(G)|} d_k(G)},$$
that is, the average cardinality of a dominating set of $G$. For simplicity let 

 $$\Gamma_G= \sum\limits_{k=\gamma(G)}^{|V(G)|} d_k(G)\hspace{2mm} \text{ and } \hspace{2mm} \Gamma'_G=\sum\limits_{k=\gamma(G)}^{|V(G)|} kd_k(G),$$

\noindent therefore we have $\avd(G)=\frac{\Gamma'_G}{\Gamma_G}$.

\section{Proof of Conjecture~\ref{conj:2n/3}}\label{sec:avdnorm}
A crucial notation to the work in~\cite{2021Beaton} and to our work is that of critical vertices in a dominating set. For a fixed dominating set $S$ of a graph $G$, $v\in S$ is \emph{critical} with respect to $S$ if $S-v$ is no longer a dominating set. For a dominating set $S$ of a graph $G$ let $a(S)$ be the set of all critical vertices with respect to $S$, that is,

$$a(S) = \{v \in S : S-v \notin \mathcal{D}_G\}.$$

\noindent The total number of critical vertices amongst all dominating sets in $G$ is a function of $\Gamma'_G$ and $\Gamma_G$.
 
\begin{lemma}[\cite{2021Beaton}]
\label{lem:sumaS}
If graph is a graph $G$ with $n$ vertices, then
$$\sum\limits_{S \in \mathcal{D}_G}|a(S)| = 2\Gamma'_G-n\Gamma_G.$$
\end{lemma}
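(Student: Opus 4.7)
The plan is to prove the identity by double counting the pairs $(S,v)$ with $S\in\mathcal{D}_G$ and $v\in S$, which I will call \emph{flagged dominating pairs}. The total number of flagged dominating pairs equals $\sum_{S\in\mathcal{D}_G}|S|=\Gamma'_G$, so the equation to verify is essentially a partition of this count according to criticality.

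First I would split flagged dominating pairs into two classes based on whether $v$ is critical in $S$ or not. The critical class has size $\sum_{S\in\mathcal{D}_G}|a(S)|$ by definition, so it suffices to count the non-critical class and subtract from $\Gamma'_G$. The key observation is that $v$ is non-critical in $S$ precisely when $S\setminus\{v\}$ is still a dominating set. This suggests the map $(S,v)\mapsto(S\setminus\{v\},v)$ from non-critical flagged pairs to pairs $(T,v)$ with $T\in\mathcal{D}_G$ and $v\notin T$.

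Next I would verify this map is a bijection. It is well-defined by the characterization of non-criticality just stated. The inverse is $(T,v)\mapsto(T\cup\{v\},v)$: since $T$ already dominates $G$, so does any superset, so $T\cup\{v\}\in\mathcal{D}_G$, and removing $v$ recovers the dominating set $T$, making $v$ non-critical in $T\cup\{v\}$. Thus the number of non-critical flagged pairs equals $|\{(T,v):T\in\mathcal{D}_G,\,v\notin T\}|=\sum_{T\in\mathcal{D}_G}(n-|T|)=n\Gamma_G-\Gamma'_G$.

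Combining these counts gives
\[
\sum_{S\in\mathcal{D}_G}|a(S)|=\Gamma'_G-(n\Gamma_G-\Gamma'_G)=2\Gamma'_G-n\Gamma_G,
\]
as required. There is no real obstacle here; the only thing to be careful about is confirming that the inverse map genuinely lands in the non-critical class, which reduces to the tautology that deleting a vertex we just added returns the original dominating set.
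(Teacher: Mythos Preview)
Your proof is correct. The paper does not actually supply its own proof of this lemma; it is quoted from \cite{2021Beaton} and used as a black box, so there is nothing in the present paper to compare your argument against. Your double-counting of pairs $(S,v)$ with $v\in S$, splitting by criticality and bijecting the non-critical pairs with pairs $(T,v)$ where $T\in\mathcal{D}_G$ and $v\notin T$, is a clean and standard way to establish the identity, and all steps are justified.
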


Let $N(S)$ denote the usual open neighbourhood of the set $S$, $\left(\bigcup_{v\in S}N(v)\right)- S$. We note if $S$ is a dominating set, then we note that $N(S)$ is also the set of vertices not $S$ (i.e. the set $V-S$). Note that for a graph with $n$ vertices

\[\sum\limits_{S \in \mathcal{D}_G}|N(S)| = \sum_{S \in \mathcal{D}_G} (n-|S|) = n\Gamma_G-\Gamma'_G  \refstepcounter{eqcount} \label{eqn:N(S)} \tag{\theeqcount}.\]

\begin{observation}
\label{obs:AvN}
For a graph $G$ with $n$ vertices $ \avd(G) \leq \frac{2n}{3}$ if and only if

$$\sum\limits_{S \in \mathcal{D}_G}|a(S)| \leq \sum\limits_{S \in \mathcal{D}_G}|N(S)|. $$
\end{observation}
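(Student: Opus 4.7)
The plan is to reduce the claimed equivalence to a direct algebraic manipulation, since the two sums on the right-hand side of the inequality have already been expressed in terms of $\Gamma_G$ and $\Gamma'_G$ in Lemma~\ref{lem:sumaS} and equation~(\ref{eqn:N(S)}), respectively. Thus the observation is really a statement about two rational expressions in $\Gamma_G, \Gamma'_G, n$.

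First I would substitute directly: by Lemma~\ref{lem:sumaS}, $\sum_{S \in \mathcal{D}_G}|a(S)| = 2\Gamma'_G - n\Gamma_G$, and by (\ref{eqn:N(S)}), $\sum_{S \in \mathcal{D}_G}|N(S)| = n\Gamma_G - \Gamma'_G$. Therefore the inequality
$$\sum_{S \in \mathcal{D}_G}|a(S)| \leq \sum_{S \in \mathcal{D}_G}|N(S)|$$
is equivalent to $2\Gamma'_G - n\Gamma_G \leq n\Gamma_G - \Gamma'_G$, which simplifies to $3\Gamma'_G \leq 2n\Gamma_G$.

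Next I would divide both sides by $3\Gamma_G$, which is strictly positive since $G$ has at least the dominating set $V(G)$ (so $\Gamma_G \geq 1$). This yields
$$\frac{\Gamma'_G}{\Gamma_G} \leq \frac{2n}{3},$$
and by the definition $\avd(G) = \Gamma'_G / \Gamma_G$ recorded just above, this is exactly $\avd(G) \leq 2n/3$. Each step is reversible, so the equivalence holds in both directions.

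There is no real obstacle here: the content of the observation is that the inequality $\avd(G) \leq 2n/3$ admits a clean combinatorial reformulation as a pointwise-on-average comparison between critical vertices and non-dominating-set neighbours, and this reformulation will be the essential leverage in the later proof of the conjecture. The only thing to be careful about is that $\Gamma_G > 0$, which is immediate.
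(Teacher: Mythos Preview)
Your proof is correct and follows essentially the same approach as the paper: substitute the expressions from Lemma~\ref{lem:sumaS} and equation~(\ref{eqn:N(S)}), then reduce the inequality algebraically to $\avd(G) \leq 2n/3$. The only cosmetic difference is that the paper divides by $\Gamma_G$ first and writes the intermediate step as $2\avd(G) - n \leq n - \avd(G)$, whereas you collect terms to $3\Gamma'_G \leq 2n\Gamma_G$ before dividing; you are also slightly more explicit about why $\Gamma_G > 0$.
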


\begin{proof}
By Lemma \ref{lem:sumaS} and equation \ref{eqn:N(S)} we have  

$$\sum\limits_{S \in \mathcal{D}_G}|a(S)| = 2\Gamma'_G-n\Gamma_G \text{ }\hspace{2mm}\text{ and }\hspace{2mm}\text{ } \sum\limits_{S \in \mathcal{D}_G}|N(S)| =n\Gamma_G-\Gamma'_G.$$

\noindent Therefore, 

\begin{align*}
& \hspace{-3cm} &  \sum\limits_{S \in \mathcal{D}_G}|a(S)| \leq & \sum\limits_{S \in \mathcal{D}_G}|N(S)|\\
\Leftrightarrow& \hspace{-3cm} & 2\Gamma'_G-n\Gamma_G  \leq & n\Gamma_G-\Gamma'_G  \\
\Leftrightarrow& \hspace{-3cm} & 2\avd(G)-n \leq & n - \avd(G)  \\
\Leftrightarrow& \hspace{-3cm} & \avd(G) \leq & \frac{2n}{3}.  
\end{align*}
\end{proof}
\noindent 

For a dominating set $S$, we will need to consider partitions of both $N(S)$ and $a(S)$ into two sets as was done in~\cite{2021Beaton}. For $N(S)$, we define $N_1(v)$ as the set of vertices outside $S$ which have a single neighbour in $S$, and $N_2(S)$ as the set of vertices outside $S$ which have more than one neighbour in $S$. More precisely,

\vspace{-6mm}

\begin{align*}
N_1(S) &= \{v \in V-S : |N[v] \cap S| = 1 \}\\
N_2(S) &= \{v \in V-S : |N[v] \cap S| \geq 2 \}.
\end{align*}

\noindent Since $S$ is a dominating set, it is clear that $N(S)=N_1(S)\cup N_2(S)$. Similarly, for $a(S)$, we define sets $a_1(S)$ and $a_2(S)$ such that $a(S) = a_1(S) \cup a_2(S)$ as follows:

\vspace{-6mm}

\begin{align*}
a_1(S) &= \{v \in a(S) : N(v) \cap N_1(S) \neq \emptyset \}\\
a_2(S) &= \{v \in a(S) : N(v) \cap N_1(S) = \emptyset \}.
\end{align*} 


Now that we have these partitions, we can state the following lemma that will be useful.

\begin{lemma}[\cite{2021Beaton}]
\label{lem:a1n1}
 Let $G$ be a graph. For any dominating set $S \in D_G$, $|a_1(S)| \leq |N_1(S)|$.
\end{lemma}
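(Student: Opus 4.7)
The plan is to prove the lemma by constructing an explicit injection $f \colon a_1(S) \to N_1(S)$, which immediately implies the desired cardinality bound. The key observation driving the construction is that each vertex $u \in N_1(S)$ has, by definition, exactly one neighbour in $S$, so any vertex of $S$ that is adjacent to $u$ is uniquely determined by $u$.

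First, I would unpack the definition of $a_1(S)$: for every $v \in a_1(S)$, the set $N(v) \cap N_1(S)$ is non-empty. This allows us to pick, for each such $v$, some vertex $u_v \in N(v) \cap N_1(S)$ and set $f(v) = u_v$. This gives a well-defined map $f \colon a_1(S) \to N_1(S)$; the only remaining task is to verify that $f$ is injective.

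For injectivity, suppose that $v, v' \in a_1(S)$ satisfy $f(v) = f(v') = u$. Then $v, v' \in N(u) \cap S$. Since $u \in N_1(S)$ lies outside $S$, we have $N(u) \cap S = N[u] \cap S$, and by the definition of $N_1(S)$ this intersection has cardinality exactly one. Hence $v = v'$, and $f$ is injective, so $|a_1(S)| \le |N_1(S)|$.

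There is no substantial obstacle here: the proof is essentially a one-line counting argument once one observes that the ``unique neighbour in $S$'' property of $N_1(S)$ vertices forces the natural map $v \mapsto$ (a neighbour of $v$ in $N_1(S)$) to be injective. Notably, the critical-vertex hypothesis $v \in a(S)$ is not even used in the injection, although it is part of the definition of $a_1(S)$; in fact, the existence of a neighbour in $N_1(S)$ already forces $v$ to be critical, since removing $v$ from $S$ would leave such a neighbour undominated.
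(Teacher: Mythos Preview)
Your proof is correct. The paper itself cites this lemma from \cite{2021Beaton} without reproving it, but the argument given for the generalization in Lemma~\ref{lem:a1n1_v2} uses essentially the same idea in dual form: there a surjection $f\colon N_1(S)\to a_1(S)$ is constructed by sending each $u$ to its unique neighbour in $S$, whereas you build the corresponding injection $a_1(S)\to N_1(S)$.
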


Lemma \ref{lem:a1n1} implies that $\sum\limits_{S \in \mathcal{D}_G}|a_1(S)| \leq \sum\limits_{S \in \mathcal{D}_G}|N_1(S)|$ and hence by Observation \ref{obs:AvN} it is sufficient to show $\sum\limits_{S \in \mathcal{D}_G}|a_2(S)| \leq \sum\limits_{S \in \mathcal{D}_G}|N_2(S)|$ to prove that $\avd(G) \leq \frac{2n}{3}$.

The arguments used in \cite{2021Beaton} involved counting the critical vertices in a given dominating set. One of the limitations of this approach was that vertices with low degree do not find themselves in $N_2(S)$ for many dominating sets. More specifically, a vertex of degree one will never be in the set $N_2(S)$ for any dominating set as it does not have two vertices in its open neighbourhood. However, if a vertex with low degree has a neighbour with high degree then they will appear in $a_2(S)$ more frequently than $N_2(S)$. To deal with this we will instead consider ordered pairs $(v, S)$ where $v$ is a vertex in one of $a_1(S)$, $a_2(S)$, $N_1(S)$, or $N_2(S)$. This will allow us to group vertices in such a way that collectively they appear in $N(S)$ more frequently than $a(S)$. For a graph $G$, a subset of its vertices $X \subseteq V(G)$ and a collection of dominating sets $\mathcal{Y} \subseteq \mathcal{D}_G$, let 

\vspace{-6mm}

\begin{align*}
\A[\mathcal{Y}]{X}{1} &= \{(v,S) \in X \times \mathcal{Y} : v \in a_1(S)\}\\
\A[\mathcal{Y}]{X}{2} &= \{(v,S) \in X \times \mathcal{Y} : v \in a_2(S)\}\\
\N[\mathcal{Y}]{X}{1} &= \{(v,S) \in X \times \mathcal{Y} : v \in N_1(S)\}\\
\N[\mathcal{Y}]{X}{2} &= \{(v,S) \in X \times \mathcal{Y} : v \in N_2(S)\}.
\end{align*}

\vspace{-6mm}

\noindent Note that

$$\sum\limits_{S \in \mathcal{Y}}|a_1(S) \cap X| = |\A[\mathcal{Y}]{X}{1}|, \text{ }\hspace{2mm}\text{  }\hspace{2mm}\text{ } \sum\limits_{S \in \mathcal{Y}}|a_2(S) \cap X| = |\A[\mathcal{Y}]{X}{2}|,$$
$$ \sum\limits_{S \in \mathcal{Y}}|N_1(S) \cap X|  = |\N[\mathcal{Y}]{X}{1}| \text{ }\hspace{2mm}\text{ and }\hspace{2mm}\text{ } \sum\limits_{S \in \mathcal{Y}}|N_2(S) \cap X| = |\N[\mathcal{Y}]{X}{2}|.$$

\noindent Additionally, let $\A[\mathcal{Y}]{X}{}=\A[\mathcal{Y}]{X}{1} \cup \A[\mathcal{Y}]{X}{2}$  and $\N[\mathcal{Y}]{X}{}=\N[\mathcal{Y}]{X}{1} \cup \N[\mathcal{Y}]{X}{2}$. The next observation allows us to use results from \cite{2021Beaton} using our refined notation in the case that $\mathcal{Y} =  \mathcal{D}_G$ and $X=V$.

\begin{observation}
\label{obs:NewNotation}
For a graph $G$ then

$$\sum\limits_{S \in \mathcal{D}_G}|a(S)| = |\A{V}{}| \text{ }\hspace{2mm}\text{ and }\hspace{2mm}\text{ } \sum\limits_{S \in \mathcal{D}_G}|N(S)| = |\N{V}{}|$$
\end{observation}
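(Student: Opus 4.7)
The plan is pure bookkeeping: unfold the definitions of the partitioned critical and neighbour sets and match them against the $(v,S)$-pair counts $\A{V}{i}$ and $\N{V}{i}$. First I would observe that by definition $a_1(S)$ and $a_2(S)$ partition $a(S)$ according to whether a critical vertex has a neighbour in $N_1(S)$, and $N_1(S)$ and $N_2(S)$ partition $N(S)$ according to whether a vertex in $V-S$ has exactly one or at least two neighbours in $S$. Consequently, for every $S\in\mathcal{D}_G$,
\[|a(S)|=|a_1(S)|+|a_2(S)|\quad\text{and}\quad|N(S)|=|N_1(S)|+|N_2(S)|.\]

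Next, I specialize the four identities displayed immediately before the observation to $X=V$ and $\mathcal{Y}=\mathcal{D}_G$. Since $a_i(S),N_i(S)\subseteq V$, we have $a_i(S)\cap V=a_i(S)$ and $N_i(S)\cap V=N_i(S)$, so those identities reduce to
\[\sum_{S\in\mathcal{D}_G}|a_i(S)|=|\A{V}{i}|\quad\text{and}\quad\sum_{S\in\mathcal{D}_G}|N_i(S)|=|\N{V}{i}|,\qquad i=1,2.\]
Summing the $i=1$ and $i=2$ cases, interchanging the order of summation on the left, and invoking the definitions $\A{V}{}=\A{V}{1}\cup\A{V}{2}$ and $\N{V}{}=\N{V}{1}\cup\N{V}{2}$ on the right then gives the two claimed equalities.

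There is essentially no technical obstacle here: the statement is a compatibility check between the original set-sum framework and the refined pair-counting framework introduced in this section, and its proof is just chasing definitions. The only point to verify explicitly is that the unions $\A{V}{1}\cup\A{V}{2}$ and $\N{V}{1}\cup\N{V}{2}$ are disjoint, so their cardinalities add; this is immediate from the fact that $a_1(S)\cap a_2(S)=\emptyset$ and $N_1(S)\cap N_2(S)=\emptyset$ for each fixed $S\in\mathcal{D}_G$, which forces the corresponding sets of pairs $(v,S)$ to be disjoint as well.
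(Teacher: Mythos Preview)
Your proposal is correct and matches the paper's intended reasoning. The paper gives no explicit proof of this observation, treating it as an immediate consequence of the displayed identities $\sum_{S\in\mathcal{Y}}|a_i(S)\cap X|=|\A[\mathcal{Y}]{X}{i}|$ and $\sum_{S\in\mathcal{Y}}|N_i(S)\cap X|=|\N[\mathcal{Y}]{X}{i}|$ specialized to $X=V$ and $\mathcal{Y}=\mathcal{D}_G$, together with the definitions $\A{V}{}=\A{V}{1}\cup\A{V}{2}$ and $\N{V}{}=\N{V}{1}\cup\N{V}{2}$; your write-up is exactly this unpacking, with the disjointness check made explicit.
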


One benefit of this approach is that each of the four sets $\A[\mathcal{Y}]{X}{1}$, $\A[\mathcal{Y}]{X}{2}$, $\N[\mathcal{Y}]{X}{1}$ and $\N[\mathcal{Y}]{X}{2}$ are additive across disjoint sets. For example, if $X_1, X_2 \subseteq V$ and $\mathcal{Y}_1, \mathcal{Y}_2 \subseteq \mathcal{D}_G$ are such that $X_1 \cap X_2 = \emptyset$ and $\mathcal{Y}_1 \cap \mathcal{Y}_2 = \emptyset$ then 

$$|\A[\mathcal{Y}_1 \cup \mathcal{Y}_2]{X_1 \cup X_2}{1}| = |\A[\mathcal{Y}_1]{X_1}{1}| + |\A[\mathcal{Y}_1]{X_2}{1}|  + |\A[\mathcal{Y}_2]{X_1}{1}|  + |\A[\mathcal{Y}_2]{X_2}{1}|.$$

\noindent If $X$ is the singleton set $X=\{v\}$ we will simply write $\A[\mathcal{Y}]{v}{1}$, $\A[\mathcal{Y}]{v}{2}$, $\N[\mathcal{Y}]{v}{1}$, and $\N[\mathcal{Y}]{v}{2}$. Thus, Lemma~\ref{lem:a1n1} can be restated as showing $|\A{V}{1}| \leq |\N{V}{1}|$. Observation~\ref{obs:AvN} then implies is that if $|\A{V}{2}| \leq |\N{V}{2}|$, then $\avd(G) \leq \frac{2n}{3}$. In the next lemma we will give singleton sets $\{v\}$ such that $|\A{v}{2}| \leq |\N{v}{2}|$. 

\begin{lemma}
\label{lem:deg2}
Let $G$ be a graph with vertex $v$ such that $\deg(v) \geq 2$. Then 

$$(2^{\deg(v)}-\deg(v)-1) \cdot |\A{v}{2}| < |\N{v}{2}|.$$

\end{lemma}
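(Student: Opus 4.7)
My plan is to construct an injection $\phi$ from $\A{v}{2} \times \mathcal{T}$ into $\N{v}{2}$, where $\mathcal{T} := \{T \subseteq N(v) : |T| \ge 2\}$, and then to exhibit a single element of $\N{v}{2}$ outside the image of $\phi$. Since $|\mathcal{T}| = 2^{\deg(v)} - \deg(v) - 1$, the injectivity plus the extra witness will deliver the strict inequality.

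Before defining $\phi$ I would unpack what $v \in a_2(S)$ means: the vertex $v$ must be critical while having no neighbour in $N_1(S)$, which forces $v \in S$, $N(v) \cap S = \emptyset$, and every $u \in N(v)$ to have a second dominator in $S \setminus \{v\}$. Equivalently, every $S \in \A{v}{2}$ has the form $S = \{v\} \cup R$ with $R \subseteq V \setminus N[v]$ and $R$ dominating $V \setminus \{v\}$ in $G$. I then set $\phi(S, T) := (S \setminus \{v\}) \cup T = R \cup T$. The image lies in $\N{v}{2}$: clearly $v \notin \phi(S, T)$, we have $N(v) \cap \phi(S, T) = T$ of size at least $2$, and $\phi(S, T)$ dominates $V$ since $R$ already dominates $V \setminus \{v\}$ and any vertex of $T$ dominates $v$. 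Injectivity is immediate because given $S' = \phi(S, T)$ one recovers $T = S' \cap N(v)$ and $R = S' \setminus N(v)$, hence $S = R \cup \{v\}$. Consequently the image of $\phi$ has exactly $(2^{\deg(v)} - \deg(v) - 1)\,|\A{v}{2}|$ elements.

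The heart of the argument — and the step I expect to be the only real obstacle — is exhibiting $S^\ast \in \N{v}{2} \setminus \mathrm{Im}(\phi)$. My candidate is
\[
S^\ast := N(v) \cup W, \qquad W := \{ w \in V \setminus N[v] : N(w) \cap N(v) = \emptyset \}.
\]
Each vertex of $V \setminus N[v]$ is either in $W$ or has a neighbour in $N(v)$, and $v$ is dominated by any element of $N(v)$, so $S^\ast$ dominates $V$; combined with $v \notin S^\ast$ and $|N(v) \cap S^\ast| = \deg(v) \ge 2$, this places $S^\ast$ in $\N{v}{2}$. If $S^\ast$ were in $\mathrm{Im}(\phi)$, the only possible preimage would be $T = N(v)$ and $S = W \cup \{v\}$; but by the defining property of $W$ (and trivially when $W = \emptyset$), no vertex of $W$ is adjacent to any $u \in N(v)$, so the $a_2$-condition fails at every such $u$ and $S \notin \A{v}{2}$. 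Hence $|\N{v}{2}| \ge (2^{\deg(v)} - \deg(v) - 1)\,|\A{v}{2}| + 1$, which gives the lemma. The delicate feature to stress is that the single construction $S^\ast = N(v) \cup W$ handles both regimes uniformly: when $W = \emptyset$ the putative preimage collapses to $\{v\}$, which vacuously violates the $a_2$-condition, and when $W \neq \emptyset$ the condition genuinely fails because $W$ is disconnected from $N(v)$.
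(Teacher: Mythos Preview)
Your proof is correct and follows essentially the same approach as the paper: the injection $\phi$ coincides with the paper's map $f$, and your witness $S^\ast = N(v) \cup W$ is exactly the paper's set $N(v) \cup (V \setminus N[N[v]])$, since $W = V \setminus N[N[v]]$. The only difference is that you deploy $S^\ast$ uniformly for all $\deg(v) \ge 2$, whereas the paper handles $\deg(v) > 2$ by a separate (and less transparent) argument and reserves this witness for the case $\deg(v) = 2$.
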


\begin{proof}
Let $S$ be a dominating set such that $(v,S) \in \A{v}{2}$. Therefore $v \in a_2(S)$ and $S-v$ is not a dominating set. We begin by showing $S-v$ dominates every vertex in $G$ except $v$. Since $S$ is a dominating set of $G$ then the only vertices which are not dominated by $S-v$ are in $N[v]$. If a vertex in $N[v]$ is not dominated by $S-v$, it must not be in $S-v$. By the definition of $a_2(S)$ we have that $N(v)\cap N_1(S)= \emptyset$ and hence every vertex in $N(v)$ which is not in $S-v$ is in $N_2(S)$. By the definition of $N_2(S)$, every vertex in $N(v)$ which is not in $S$ has another neighbour in $S$ and therefore are still dominated by $S-v$. Therefore the only vertex which is not dominated by $S-v$ is $v$ itself. Thus, for any subset $T \subseteq N(v)$ we have that $(S-v) \cup T$ is a dominating set of $G$. Moreover if $|T| \geq 2$ then $v \in N_2((S-v) \cup T)$ and hence $(v, (S-v) \cup T) \in \N{v}{2}$. 

We now define a mapping that will allow us to find for each dominating set $S$ and $v\in a_2(S)$, a new dominating set without $v$ and with at least two neighbours of $v$. Let $\mathcal{P}_2( N(v))$ denote the collection of all subsets $T \subseteq N(v)$ with $|T|\geq 2$.  Let $f:\A{v}{2} \times \mathcal{P}_2( N(v)) \rightarrow \N{v}{2}$ be the mapping $f((v, S), T)=(v, (S-v) \cup T)$. For any $(v, S_1), (v, S_{2}) \in \A{v}{2}$ and $T_1, T_2 \in \mathcal{P}_2( N(v))$ we have that $(S_1-v)\cap T_1=\emptyset$ and $(S_2-v)\cap T_2=\emptyset$. Therefore, since $|T_1|,|T_2|\ge 2$, $(S_1-v)\cup T_1=(S_2-v) \cup T_2$ only if $S_1=S_2$ and $T_1=T_2$. Thus, $f$ is an injective mapping from $\A{v}{2} \times \mathcal{P}_2( N(v))$ to $\N{v}{2}$. Finally as $|\mathcal{P}_2( N(v))|=2^{\deg(v)}-\deg(v)-1$ we obtain

$$(2^{\deg(v)}-\deg(v)-1) \cdot |\A{v}{2}| \leq |\N{v}{2}|.$$

To show the inequality is strict, we will show that $f$ is not surjective. first note that as $\deg(v) \geq 2$ then $(v,V-v) \in \N{v}{2}$ and hence $|\N{v}{2}|>1$. When $\deg(v) > 2$ then $2^{\deg(v)}-\deg(v)-1) >1$ and hence the inequality is strict. So suppose $\deg(v) = 2$. It suffices to show there is a dominating set $S$ such that $(v,S) \in \N{v}{2}$ but $(v,S)$ is not in the image of $f$. Let $S=N(v) \cup (V-N[N[v]])$. This is a dominating set in $G$ as $N(v)$ dominates every vertex in $N[N[v]]$. Moreover $v \in N_2(S)$ so $(v,S) \in \N{v}{2}$. If $(v,S)$ were in the image of $f$ then $S-N[v]$ would dominate every vertex in $G$ except for $v$. However $S-N[v]=V-N[N[v]]$ and thus does not dominate any vertex in $N(v)$. Therefore $(v,S)$ is not in the image of $f$ and the inequality is strict.

\end{proof}

If $G$ is a graph with $n$ vertices and minimum degree $\delta \geq 2$, then Lemma \ref{lem:deg2} and Lemma \ref{lem:a1n1} imply that $|\A{V}{}| < |\N{V}{}|$. Therefore by Observation \ref{obs:AvN} we have that $\avd(G) < \frac{2n}{3}$. If $G$ has a vertex $v$ of degree one then $|\N{v}{2}|=0$ as no dominating set could possibly omit $v$ while still containing two neighbours of $v$. However, it is still possible for $|\A{v}{2}| > 0$ when $\deg(v)=1$ and hence $|\A{v}{2}| > |\N{v}{2}|$. For example, when $G = K_{1,n-1}$ and $n\geq 3$ then the dominating set $S$ containing only degree one vertices is minimal and thus $a(S)=S$. Moreover, $N_1(S) = \emptyset$ so $a_2(S)=S$ and every degree one vertex $\ell$ of $K_{1,n-1}$ has $|\A{v}{2}| > 0$ but $|\N{v}{2}|=0$. To deal with this, we will group the degree one vertices in $G$ with other vertices in $G$ to form a set $X$ which satisfies $|\A{X}{2}| \leq |\N{X}{2}|$.

For a graph $G$, we call a vertex with degree one a \emph{leaf}. The lone neighbour of a leaf is called a \emph{stem}. For a graph $G$, a \emph{$k$-stem} is stem which is adjacent to $k$ leaves. For a stem $s$, let $L(s)$ denote the collection of leaves adjacent to $s$ and let $L[s]=L(s) \cup \{s\}$. We will now partition $\mathcal{D}_G$ based on which stems $s$ have $L[s] \not\subseteq S$. Suppose $G$ has stems $s_1, s_2, \ldots , s_{\omega}$. For some $I \subseteq \{s_1, s_2, \ldots, s_{\omega}\}$ let $\mathcal{X}_{I}$ denote the collection of dominating sets $S \in \mathcal{D}_G$ with $L[s] \not\subseteq  S$ for all $s \in I$ and $L[s] \subseteq  S$ for all $s \notin I$. That is

$$\mathcal{X}_{I}= \left( \bigcap_{s \in I} \{S \in \mathcal{D}_G : L[s] \not\subseteq  S \} \right) \cap \left( \bigcap_{s \notin I} \{S \in \mathcal{D}_G : L[s] \subseteq  S\} \right).$$

\noindent Since any leaf can only be dominated by itself or its stem, it follows that the collection of all $\mathcal{X}_{I}$ partitions $\mathcal{D}_G$.

\begin{lemma}
\label{lem:kstem}
Let $G$ have stems $s_1, s_2, \ldots , s_{\omega}$. For any $I \subseteq \{s_1, s_2, \ldots, s_{\omega}\}$ and \emph{$k$-stem} $s \in I$ we have

$$|\A[\mathcal{X}_{I}]{L[s]}{}|\leq |\N[\mathcal{X}_{I}]{L[s]}{}|, $$

\noindent and strict inequality $|\A[\mathcal{X}_{I}]{L[s]}{}| < |\N[\mathcal{X}_{I}]{L[s]}{}|$ if $k \geq 3$.
\end{lemma}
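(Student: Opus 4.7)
The plan is to partition $\mathcal{X}_I$ according to the trace $T = S \cap L[s]$. Since $s \in I$ forbids $T = L[s]$ while every leaf $\ell_i \in L(s)$ must still be dominated by $s$ or by itself, the admissible traces are \emph{Type 1}: $T = L(s)$ (so $s \notin S$), and \emph{Type 2}: $T = \{s\} \cup U$ for some $U \subsetneq L(s)$, giving $2^k - 1$ such traces. I would then work out the contribution of $L[s]$ to $a(S)$ and $N(S)$ in each case. In Type 1, each $\ell_i \in S$ is critical because its unique neighbour $s$ lies outside $S$, and $s$ is itself dominated by $L(s) \subseteq S$, so $L[s]$ contributes $k$ to $|a(S) \cap L[s]|$ and $1$ to $|N(S) \cap L[s]|$. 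In Type 2, every $\ell_j \notin U$ lies in $N_1(S)$ (unique neighbour $s \in S$), the vertex $s$ is critical precisely because of these missing leaves, and each $\ell_i \in U$ is redundant (still dominated by $s \in S - \ell_i$), so the contributions become $1$ and $k - |U|$.

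Next I would argue that the number $E(T)$ of extensions of $T$ to an element of $\mathcal{X}_I$ depends only on whether $s \in T$: leaves of $s$ have no neighbours outside $L[s]$, so they affect neither the domination of $V - L[s]$ nor the $L[s']$-constraints at any other stem $s'$. Write $E_0 := E(L(s))$ and $E_1 := E(\{s\} \cup U)$ (the latter value being the same for every $U \subsetneq L(s)$). The key inequality $E_0 \leq E_1$ comes from the observation that if $S_R \subseteq V - L[s]$ makes $L(s) \cup S_R \in \mathcal{X}_I$, then $S_R$ alone must dominate $V - L[s]$, so $\{s\} \cup S_R$ is also a dominating set and satisfies the identical stem constraints elsewhere; hence the same $S_R$ is a valid extension of $\{s\}$.

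Combining these with the identity $\sum_{j=0}^{k-1}(k-j)\binom{k}{j} = k \cdot 2^{k-1}$ yields
\[
|\A[\mathcal{X}_I]{L[s]}{}| = k E_0 + (2^k - 1) E_1 \quad \text{and} \quad |\N[\mathcal{X}_I]{L[s]}{}| = E_0 + k \cdot 2^{k-1} E_1,
\]
so the lemma reduces to $(k-1) E_0 \leq ((k-2) 2^{k-1} + 1) E_1$. This holds for $k = 1$ (both sides zero) and for $k = 2$ (both coefficients equal $1$, with $E_0 \leq E_1$); for $k \geq 3$ we have $(k-2) 2^{k-1} + 1 > k - 1$ strictly, which combined with $E_0 \leq E_1$ gives the strict inequality whenever $E_1 > 0$ (and if $E_1 = 0$ then $E_0 = 0$ and both sides vanish). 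The main obstacle is the careful local bookkeeping in the first step for which vertices of $L[s]$ belong to $a(S)$ versus $N(S)$ under each trace $T$, particularly verifying that leaves in $U$ are never critical in Type 2; once that is settled, the extension injection and the elementary coefficient comparison finish the proof.
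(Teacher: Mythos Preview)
Your proof is correct and follows essentially the same route as the paper: both partition $\mathcal{X}_I$ according to whether $s\in S$, establish the same injection (your $E_0\le E_1$ is exactly the paper's $(2^k-1)|\mathcal{X}_s|\le|\mathcal{X}_L|$ once one notes every Type~2 trace has the same extension count), and perform the identical local bookkeeping for $a(S)\cap L[s]$ and $N(S)\cap L[s]$. The only difference is organizational: you use the uniformity of $E_1$ across all $U\subsetneq L(s)$ to compute both totals in closed form, whereas the paper carves out the image $\mathcal{X}'_L$ of the injection and bounds the leftover $\mathcal{X}_L-\mathcal{X}'_L$ separately via $|L-S|\ge 1$; your packaging is a bit tidier and yields the strict inequality for $k\ge 3$ under the slightly weaker hypothesis $E_1>0$ rather than $|\mathcal{X}_s|>0$.
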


\begin{proof}
Note that as $s \in I$ then for any $S \in \mathcal{X}_{I}$ we have that $L[s] \not\subseteq S$. For simplicity let $L=L(s)$. We begin by partitioning $\mathcal{X}_{I}=\mathcal{X}_{s} \cup \mathcal{X}_{L}$ where $\mathcal{X}_{s}=\{S\in \mathcal{X}_{I}: s\not\in S\}$ and $\mathcal{X}_{L}=\{S\in \mathcal{X}_{I}: L\not\subseteq S\}$. Note if $S \in \mathcal{X}_{s}$, then every leaf in $L$ must be in $S$ otherwise it would not be dominated. Moreover, if $S \in \mathcal{X}_{L}$ then $s \in S$ otherwise the leaves of $L$ which are not in $S$ would not be dominated. Thus, $\mathcal{X}_{s}$ and $\mathcal{X}_{L}$ do indeed partition $\mathcal{X}_{I}$. These two cases are illustrated in Figure \ref{fig:L12} where the shaded vertices are the vertices in $S$.

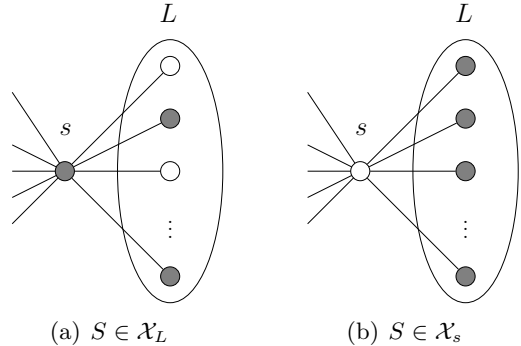
\begin{figure}[h]
\def\c{0.7}
\centering
\subfigure[$S \in \mathcal{X}_{L}$]{
\scalebox{\c}{
\begin{tikzpicture}

\draw[] (2,0) ellipse (1 and 2.5);
\node[text width=1cm] at (2.3,3) {\Large $L$};
\node[text width=1cm] at (0.4,0.8) {\Large $s$};

\path [-] (0,0) edge node {} (-1, 1.5);
\path [-] (0,0) edge node {} (-1, 0.5);
\path [-] (0,0) edge node {} (-1, 0);
\path [-] (0,0) edge node {} (-1, -0.5);
\path [-] (0,0) edge node {} (-1, -1);

\node[shape=circle,draw=black,fill=gray] (1) at (0,0) {};
\node[shape=circle,draw=black,fill=white] (2) at (2,2) {};
\node[shape=circle,draw=black,fill=gray] (3) at (2,1) {};
\node[shape=circle,draw=black,fill=white] (4) at (2,0) {};
\node[shape=circle,draw=black,fill=gray] (5) at (2,-2) {};

\path (4) -- node[auto=false]{\vdots} (5);

\begin{scope}
    \path [-] (1) edge node {} (2);
    \path [-] (1) edge node {} (3);
    \path [-] (1) edge node {} (4);
    \path [-] (1) edge node {} (5);
\end{scope}

\end{tikzpicture}}}
\qquad
\subfigure[$S \in \mathcal{X}_{s}$]{
\scalebox{\c}{
\begin{tikzpicture}
\draw[] (2,0) ellipse (1 and 2.5);
\node[text width=1cm] at (2.3,3) {\Large $L$};
\node[text width=1cm] at (0.4,0.8) {\Large $s$};

\path [-] (0,0) edge node {} (-1, 1.5);
\path [-] (0,0) edge node {} (-1, 0.5);
\path [-] (0,0) edge node {} (-1, 0);
\path [-] (0,0) edge node {} (-1, -0.5);
\path [-] (0,0) edge node {} (-1, -1);

\node[shape=circle,draw=black,fill=white] (1) at (0,0) {};
\node[shape=circle,draw=black,fill=gray] (2) at (2,2) {};
\node[shape=circle,draw=black,fill=gray] (3) at (2,1) {};
\node[shape=circle,draw=black,fill=gray] (4) at (2,0) {};
\node[shape=circle,draw=black,fill=gray] (5) at (2,-2) {};

\path (4) -- node[auto=false]{\vdots} (5);

\begin{scope}
    \path [-] (1) edge node {} (2);
    \path [-] (1) edge node {} (3);
    \path [-] (1) edge node {} (4);
    \path [-] (1) edge node {} (5);
\end{scope}

\end{tikzpicture}}}
\caption{The two cases for when $S \in \mathcal{X}_{L[s]}$}%
\label{fig:L12}%
\end{figure}

We will now show that $(2^{k}-1)|\mathcal{X}_{s}| \leq |\mathcal{X}_{L}|$. Let $S \in \mathcal{X}_{s}$ and note that $(S-L) \cup \{s\}$ is a dominating set of $G$. Moreover for every proper subset $T \subset L$ we have that $(S-L) \cup \{s\} \cup T \in \mathcal{X}_{L}$. Let $\mathcal{P}(L)$ denote the power set of $L$. Now define $f: (\mathcal{P}(L)-L) \times \mathcal{X}_{s} \rightarrow \mathcal{X}_{L}$ as the mapping $f(T, S) = (S-L) \cup \{s\} \cup T$. For any $S_1, S_2 \in \mathcal{X}_{s}$ we have $s\not\in S_1\cup S_2$, $L\subseteq S_1$, and $L\subseteq S_2$, so for any $T_1, T_2 \in \mathcal{P}(L)-L$ we have $(S_1-L)\cup \{s\} \cup T_1=(S_2-L)\cup \{s\} \cup T_2$ only if $S_1=S_2$ and $T_1=T_2$. Thus, $f$ is an injective mapping from $(\mathcal{P}(L)-L) \times \mathcal{X}_{s}$ to $\mathcal{X}_{L}$ and therefore $(2^{k}-1)|\mathcal{X}_{s}| \leq |\mathcal{X}_{L}|$. Additionally let $\mathcal{X}'_{L}$ be the image of $f$ over $(\mathcal{P}(L)-L) \times \mathcal{X}_{s}$. Note that $\mathcal{X}'_{L} \subseteq \mathcal{X}_{L}$ and $|\mathcal{X}'_{L}|=(2^{k}-1)|\mathcal{X}_{s}|$.

We will now consider the cases of dominating sets $S \in \mathcal{X}_{L}- \mathcal{X}'_{L}$, $S \in \mathcal{X}'_{L}$ and $S \in \mathcal{X}_{s}$ and count $a(S) \cap L[s]$ and $N(S)\cap L[s]$ for each one.

For any $S \in \mathcal{X}_{L}- \mathcal{X}'_{L}$ we still have that $S \in \mathcal{X}_{L}$. Therefore $s \in a_1(S)$ and each leaf of $L$ not in $S$ is in $N_1(S)$. Note that the leaves of $L$ in $S$ are not in either $a(S)$ nor $N(S)$. Thus, for every $S \in \mathcal{X}_{L}- \mathcal{X}'_{L}$ we have $|a(S) \cap L[s]|=1$ and $|N(S) \cap L[s]|=|L-S|$. Recall that $|L-S|\geq 1$ as there must be at least one leaf of $L$ not in $S$ by definition. Therefore for $\mathcal{X}_{L}-\mathcal{X}'_{L}$ we have

\[|\A[\mathcal{X}_{L}-\mathcal{X}'_{L}]{L[s]}{}| \leq |\N[\mathcal{X}_{L}-\mathcal{X}'_{L}]{L[s]}{}|. \refstepcounter{eqcount} \label{eqn:leftovers} \tag{\theeqcount}\]

Now consider the dominating sets in $\mathcal{X}'_{L}$. For each dominating set $S \in \mathcal{X}_{s}$, the mapping $f$ assures there are $2^{k}-1$ dominating sets in $\mathcal{X}'_{L}$ of the form $S_T = (S-L)\cup \{s\} \cup T$ each corresponding to a proper subset $T \subset L$. For each $S \in \mathcal{X}_{s}$ and proper subset $T \subset L$ we have $|a(S_T) \cap L[s]|=1$ and $|N(S_T) \cap L[s]|=|L|-|T|=k-|T|$. Therefore,

$$|\A[\mathcal{X}'_{L}]{L[s]}{}|=(2^{k}-1)|\mathcal{X}_{s}|$$

\noindent and

\begin{align*}
|\N[\mathcal{X}'_{L}]{L[s]}{}|&=|\mathcal{X}_{s}|\sum_{T\subset L}|\N[\mathcal{X}'_{L}]{S_T\cap L[s]}{}|\\
&=|\mathcal{X}_{s}|\sum_{T\subset L}\left(k-|T|\right)\\
&=|\mathcal{X}_{s}|\left(k(2^k-1)-(k\cdot 2^{k-1}-k)\right)\\
&=k\cdot 2^{k-1}|\mathcal{X}_{s}|.
\end{align*}

Now consider any $S \in \mathcal{X}_{s}$. If $k \geq 2$, then every leaf in $L$ is in $a_2(S)$ and $s \in N_2(S)$. So in this case $|a(S) \cap L[s]|=k$ and $|N(S) \cap L[s]|=1$. If $k=1$, then the one leaf in $L$ will either be in $a_1(S)$ or $a_2(S)$ depending on if $s$ has a non-leaf neighbour in $S$. Similarly, either $s \in N_1(S)$ or $s \in N_2(S)$ depending on if $s$ has a non-leaf neighbour in $S$. In both cases $|a(S) \cap L[s]|=k$ and $|N(S) \cap L[s]|=1$. Therefore, for every $k\geq 1$ and $S \in \mathcal{X}_{s}$ we have $|a(S) \cap L[s]|=k$ and $|N(S) \cap L[s]|=1$. Therefore 

$$|\A[\mathcal{X}_{s}]{L[s]}{}|=k|\mathcal{X}_{s}|  \text{ }\hspace{2mm}\text{ and }\hspace{2mm}\text{ } |\N[\mathcal{X}_{s}]{L[s]}{}|=|\mathcal{X}_{s}|.$$

Now combining the cases where $S \in \mathcal{X}_{s}$ or $S \in \mathcal{X}'_{L}$ we obtain

\vspace{-6mm}

\begin{align*}
|\A[\mathcal{X}'_{L} \cup \mathcal{X}_{s}]{L[s]}{}| &= |\A[\mathcal{X}'_{L}]{L[s]}{}|+|\A[\mathcal{X}_{s}]{L[s]}{}|=(2^{k}+k-1)|\mathcal{X}_{s}|\\
|\N[\mathcal{X}'_{L} \cup \mathcal{X}_{s}]{L[s]}{}| &= |\N[\mathcal{X}'_{L}]{L[s]}{}|+|\N[\mathcal{X}_{s}]{L[s]}{}|=(k \cdot 2^{k-1}+1)|\mathcal{X}_{s}|\\
\end{align*}

\vspace{-6mm}

\noindent Note that when $k=1,2$ we have $|\A[\mathcal{X}'_{L} \cup \mathcal{X}_{s}]{L[s]}{}|=|\N[\mathcal{X}'_{L} \cup \mathcal{X}_{s}]{L[s]}{}|$ and when $k \geq 3$ then $|\A[\mathcal{X}'_{L} \cup \mathcal{X}_{s}]{L[s]}{}|<|\N[\mathcal{X}'_{L} \cup \mathcal{X}_{s}]{L[s]}{}|$. Together with inequality $( \ref{eqn:leftovers})$ we obtain our desired result.

\end{proof}

In the next lemma we will strengthen the result from Lemma \ref{lem:a1n1} by showing the inequality $|a_1(S) \cap V'| \leq |N_1(S) \cap V_{I}|$ still holds when $V_{I} \subseteq V$ which excludes selected stems and their respective leaves.

\begin{lemma}
\label{lem:a1n1_v2}
Let $G$ be a graph with stems $s_1, s_2, \ldots, s_{\omega}$ and for some $I \subseteq \{s_1, s_2, \ldots, s_{\omega}\}$ let $V_{I} = V - \bigcup\limits_{s \in I}L[s]$. Then 

$$|\A[\mathcal{X}_I]{V_I}{1}| \leq |\N[\mathcal{X}_I]{V_I}{1}|.$$

\end{lemma}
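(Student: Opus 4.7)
The plan is to upgrade the injection used to prove Lemma~\ref{lem:a1n1} so that it respects the vertex restriction $V_I$. Concretely, I will show that for each individual $S\in\mathcal{X}_I$ we have $|a_1(S)\cap V_I|\leq |N_1(S)\cap V_I|$; summing over $S\in\mathcal{X}_I$ then yields the lemma. Recall that for each $v\in a_1(S)$ the set $N_1(S)\cap N(v)$ is nonempty by definition, and any $u$ in it has $v$ as its unique $S$-neighbour (since $u\notin S$ and $|N[u]\cap S|=1$). So assigning $\phi(v)=u$ for some choice of such $u$ produces an injection from $a_1(S)$ to $N_1(S)$. The task is to check that whenever $v\in V_I$ one may in fact choose $u\in V_I$ as well.

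Fix $v\in a_1(S)\cap V_I$ and let $u$ be any vertex of $N_1(S)\cap N(v)$. Suppose, for contradiction, that $u\notin V_I$; then $u\in L[s']$ for some $s'\in I$. If $u$ is a leaf of $s'$, its only neighbour is $s'$, so $v\in N(u)$ forces $v=s'\in L[s']$, contradicting $v\in V_I$. The remaining possibility is $u=s'$: since $u\in N_1(S)$ we have $s'\notin S$, and because each leaf adjacent to $s'$ can only be dominated by itself or by $s'$, every leaf of $s'$ must lie in $S$. Each such leaf is then an $S$-neighbour of $s'$, but $|N[s']\cap S|=1$ forces $s'$ to be a $1$-stem whose unique leaf $\ell$ is the only element of $S\cap N(s')$. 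Since $v\in S\cap N(s')$, we conclude $v=\ell\in L[s']$, again contradicting $v\in V_I$. Hence every $u\in N_1(S)\cap N(v)$ lies in $V_I$, and so $\phi(v)\in N_1(S)\cap V_I$.

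Injectivity of $\phi$ on $a_1(S)\cap V_I$ is inherited from the original argument: if $\phi(v_1)=\phi(v_2)=u$, then $v_1,v_2\in S\cap N(u)$, a set of size $1$. Therefore $|a_1(S)\cap V_I|\leq |N_1(S)\cap V_I|$ for every $S\in\mathcal{X}_I$, and summing over $S$ yields $|\A[\mathcal{X}_I]{V_I}{1}|\leq |\N[\mathcal{X}_I]{V_I}{1}|$. The only delicate step is ruling out the case $u=s'$ above, which uses the tight constraint $|N[s']\cap S|=1$ together with the fact that a stem $s'\notin S$ must have all its leaves in $S$.
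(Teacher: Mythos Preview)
Your proof is correct and follows essentially the same approach as the paper: both reduce to the per-$S$ inequality $|a_1(S)\cap V_I|\le |N_1(S)\cap V_I|$ and verify it via the natural correspondence sending a vertex of $N_1(S)$ to its unique $S$-neighbour, together with a case analysis on whether the offending vertex is a leaf or a stem of some $s'\in I$. The only cosmetic difference is that you build an injection $a_1(S)\cap V_I\to N_1(S)\cap V_I$, whereas the paper builds the surjection in the other direction; your handling of the case $u=s'$ (forcing $s'$ to be a $1$-stem with $v=\ell$) is an equivalent way of reaching the contradiction the paper obtains.
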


\begin{proof}
It is sufficient to show that for every dominating set $S \in \mathcal{X}_I$ we have

$$|a_1(S) \cap V_{I}| \leq |N_1(S) \cap V_{I}|.$$

\noindent For a dominating set $S \in \mathcal{X}_I$ and any $u \in N_1(S)$, we have $N[u] \cap S \subseteq a_1(S)$ and $|N[u] \cap S|=1$. Let $f:N_1(S) \rightarrow a_1(S)$ be the mapping $f(u) = v$ where $v$ is the single element in $N[u] \cap S$. By the definition of $a_1(S)$, every $v \in a_1(S)$ has a neighbour in $N_1(S)$. Therefore $f:N_1(S) \rightarrow a_1(S)$ is a surjective mapping from $N_1(S)$ to $a_1(S)$. We will now restrict the domain of $f$. Note that it may be the case that the lone neighbour of a vertex $u \in N_1(S)$ may not be in $V_{I}$. Let $N_1'(S) \subseteq N_1(S)$ denote the subset of $N_1(S)$ which excludes all such vertices, that is,

$$N_1'(S)= \{u \in N_1(S) : f(u) \in V_{I}\}.$$

\noindent Restrict the domain of $f$ to be $N_1'(S)\cap V_{I}$. By the definition of $N_1'(S)$, for any $u \in N_1'(S)$ we have that $f(u) \in a_1(S)\cap V_{I}$. It is sufficient to show $f:N_1'(S)\cap V_{I} \rightarrow a_1(S)\cap V_{I}$ is a surjective mapping.

Let $v \in a_1(S)\cap V_{I}$. By the definition of $a_1(S)$, we have that $v$ has at least one neighbour $u \in N_1(S)$. To show a contradiction, suppose that $u \notin N_1'(S)\cap V_{I}$. Then it is either the case that $u \notin N_1'(S)$ or $u \notin V_{I}$. If $u \notin N_1'(S)$ then $f(u)=v \notin V_{I}$ which contradicts $v \in a_1(S)\cap V_{I}$. So suppose $u \notin V_{I}$. By the definition of $V_{I}$, we have that $u$ must then either be a leaf or a stem. If $u$ is a leaf then $v$, but $u$'s stem is not in $V_{I}$ which again contradicts $v \in a_1(S)\cap V_{I}$. So suppose $u$ is a stem which is not in $V_{I}$, i.e., $u\in I$.
 As $u \in N_1(S)$ then $u \notin S$ and hence each leaf of $L(u)$ is in $S$, otherwise $S$ would not be a dominating set. However, this implies $u$ has neighbours $v$ and the vertices of $L(u)$ as neighbours in $S$.  This contracts the fact that $u \in N_1(S)$ and hence should have exactly one neighbour in $S$. Therefore $u \in V_{I}$ and hence $u \in N_1'(S)$. Thus for every $v \in a_1(S)\cap V_{I}$ there exists a $u \in N_1'(S)\cap V_I$ such that $f(u)=v$.


\end{proof}

We call a graph \emph{star-like} if it can be formed by taking a smaller graph and adding one or two leaves to each of its vertices. Equivalently, a graph is star-like if every vertex is either a leaf or a $k$-stem where $k \leq 2$, see Figure~\ref{fig:star-like} for examples of star-like graphs.

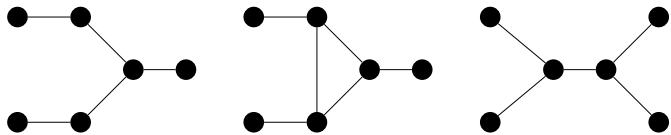
\begin{figure}
[h]
\def\c{0.7}
\def\h{2}
\centering
\scalebox{\c}{
\begin{tikzpicture}
\begin{scope}[every node/.style={circle,thick,draw,fill}]
    \node (1) at (1,0) {}; 
    \node (2) at (0,0) {}; 
    \node (3) at (-1,1) {}; 
    \node (4) at (-1,-1) {};
    \node (5) at (-2.2,1) {};  
    \node (6) at (-2.2,-1) {};
\end{scope}

\begin{scope}
    \path [-] (1) edge node {} (2);
	\path [-] (2) edge node {} (3);
	\path [-] (2) edge node {} (4);
	\path [-] (3) edge node {} (5);
	\path [-] (4) edge node {} (6);
\end{scope}
\end{tikzpicture}
\qquad
\begin{tikzpicture}
\begin{scope}[every node/.style={circle,thick,draw,fill}]
    \node (1) at (1,0) {}; 
    \node (2) at (0,0) {}; 
    \node (3) at (-1,1) {}; 
    \node (4) at (-1,-1) {};
    \node (5) at (-2.2,1) {};  
    \node (6) at (-2.2,-1) {};
\end{scope}

\begin{scope}
    \path [-] (1) edge node {} (2);
	\path [-] (2) edge node {} (3);
	\path [-] (2) edge node {} (4);
	\path [-] (3) edge node {} (4);
	\path [-] (3) edge node {} (5);
	\path [-] (4) edge node {} (6);
\end{scope}
\end{tikzpicture}
\qquad
\begin{tikzpicture}
\begin{scope}[every node/.style={circle,thick,draw,fill}]
    \node (1) at (1,1) {}; 
    \node (2) at (1,-1) {}; 
    \node (3) at (0,0) {}; 
    \node (4) at (-1,0) {}; 
    \node (5) at (-2.2,-1) {}; 
    \node (6) at (-2.2,1) {};
\end{scope}

\begin{scope}
    \path [-] (1) edge node {} (3);
	\path [-] (2) edge node {} (3);
	\path [-] (3) edge node {} (4);
	\path [-] (4) edge node {} (5);
	\path [-] (4) edge node {} (6);
\end{scope}
\end{tikzpicture}
}
\caption{All connected star-like graphs of order 6.}\label{fig:star-like}
\end{figure}

\begin{observation}[\cite{2021Beaton}]
\label{obs:StarLike}
For a star-like graph $G$ with $n$ vertices $ \avd(G) = \frac{2n}{3}$.
\end{observation}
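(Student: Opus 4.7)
The plan is to derive a product formula for the domination polynomial of a star-like graph and then evaluate $\avd(G)$ via its logarithmic derivative. By definition, $G$ arises from a ``core'' graph $H$ by attaching $k_i\in\{1,2\}$ new leaves to each vertex $s_i$ of $H$. Writing $\omega=|V(H)|$ and $L_i$ for the set of leaves attached at $s_i$, the sets $\{s_i\}\cup L_i$ partition $V(G)$, so $n=\sum_{i=1}^{\omega}(k_i+1)$.

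The structural step I would establish is that dominating sets of $G$ are parameterized by \emph{independent} per-stem choices: for each stem $s_i$, either (i) $s_i\in S$ together with an arbitrary subset of $L_i$, or (ii) $s_i\notin S$ with all of $L_i\subseteq S$. The forward direction is standard because every leaf must be dominated by itself or by its unique neighbor $s_i$. The essential content is the independence: in case (ii) the stem $s_i$ is automatically dominated by $L_i\subseteq S$, and in a star-like graph there are no other types of vertices whose domination could couple the choices at different stems. Consequently,
$$D(G,x) \;=\; \sum_{k} d_k(G)\,x^k \;=\; \prod_{i=1}^{\omega}\bigl(\,x(1+x)^{k_i} + x^{k_i}\,\bigr),$$
where $x(1+x)^{k_i}$ encodes case (i) (a factor $x$ for $s_i$ times an independent factor $1+x$ for each leaf) and $x^{k_i}$ encodes case (ii).

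Finally, taking the logarithmic derivative at $x=1$ gives
$$\avd(G)=\frac{D'(G,1)}{D(G,1)} =\sum_{i=1}^{\omega}\frac{D_i'(1)}{D_i(1)}, \qquad D_i(x)=x(1+x)^{k_i}+x^{k_i}.$$
A short direct calculation shows $D_i'(1)/D_i(1)=4/3$ when $k_i=1$ and $D_i'(1)/D_i(1)=10/5=2$ when $k_i=2$, i.e.\ $2(k_i+1)/3$ in both cases. Summing over stems yields $\avd(G)=\tfrac{2}{3}\sum_{i=1}^{\omega}(k_i+1)=\tfrac{2n}{3}$. The main (and essentially only) obstacle is verifying the product formula, which in turn hinges on the independence of the per-stem choices---exactly where the star-like hypothesis is used, since a non-leaf, non-stem vertex could create cross-stem constraints that break the factorization.
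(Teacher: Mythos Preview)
Your argument is correct. The key independence claim holds because in case~(ii) the stem $s_i$ is dominated by its own leaves (since $k_i\ge 1$), so the edges of the core $H$ between stems never impose additional constraints; this is exactly where the star-like hypothesis (no non-leaf, non-stem vertices) is used. The factorization $D(G,x)=\prod_i\bigl(x(1+x)^{k_i}+x^{k_i}\bigr)$ then follows, and the logarithmic-derivative computation is accurate.

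There is nothing in the present paper to compare against: Observation~\ref{obs:StarLike} is quoted from \cite{2021Beaton} without proof here. Your argument is a clean, self-contained verification. In the spirit of the surrounding section one could alternatively check the identity $|\mathcal{A}_{}(V,\mathcal{D}_G)|=|\mathcal{N}_{}(V,\mathcal{D}_G)|$ directly by tracing equality through Lemmas~\ref{lem:kstem} and~\ref{lem:a1n1_v2} (this is essentially what the equality case of the main theorem does in reverse), but the polynomial route you take is more direct for this standalone statement.
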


 We are now ready to prove Conjecture \ref{conj:2n/3} and completely classify which graphs $G$ with no isolated vertices have $\avd(G) = \frac{2n}{3}$. 

\begin{theorem}
\label{thm:2n/3}
Let $G$ be a graph $n \geq 2$ vertices . If $G$ has no isolated vertices then 

$$\avd(G) \leq \frac{2n}{3}, $$

\noindent with equality if and only if $G$ is star-like. 
\end{theorem}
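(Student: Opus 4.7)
The plan is to apply Observation~\ref{obs:AvN} to reduce $\avd(G)\leq 2n/3$ to proving $|\A{V}{}|\leq|\N{V}{}|$, then to assemble this from the three tools already developed (Lemmas~\ref{lem:deg2}, \ref{lem:kstem}, and~\ref{lem:a1n1_v2}) via the partition $\mathcal{D}_G=\bigcup_{I\subseteq\{s_1,\ldots,s_{\omega}\}}\mathcal{X}_I$. Let $W$ denote the set of vertices of $G$ that are neither leaves nor stems; since $G$ has no isolated vertices, every $v\in W$ satisfies $\deg(v)\geq 2$.

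For each $I$ the plan is to split $V=V_I\cup\bigcup_{s\in I}L[s]$, apply Lemma~\ref{lem:kstem} to each $s\in I$ to bound $|\A[\mathcal{X}_I]{L[s]}{}|\leq|\N[\mathcal{X}_I]{L[s]}{}|$, and apply Lemma~\ref{lem:a1n1_v2} to handle the $a_1$/$N_1$ contribution on $V_I$. The remaining $a_2$/$N_2$ terms on $V_I$ need one key observation: the set $V_I\setminus W$ contributes nothing to either $a_2$ or $N_2$ inside $\mathcal{X}_I$. Indeed, $V_I\setminus W$ consists of leaves of stems $s\notin I$ and stems $s\notin I$ themselves; in both cases the vertex is forced to lie in $S$ for every $S\in\mathcal{X}_I$ (so it is not in any $N_2(S)$), and either it is a leaf whose unique neighbour $s$ is also in $S$ (so it is not critical at all) or it is a stem $s$ whose only path to being critical is via a non-leaf neighbour $u\notin S$ with $N(u)\cap S=\{s\}$, forcing $u\in N_1(S)$ and thus $s\in a_1(S)$ rather than $a_2(S)$. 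Consequently $|\A[\mathcal{X}_I]{V_I}{2}|=|\A[\mathcal{X}_I]{W}{2}|$ and $|\N[\mathcal{X}_I]{V_I}{2}|=|\N[\mathcal{X}_I]{W}{2}|$ for every $I$, and summing the per-$I$ inequalities over all $I$ yields
$$|\A{V}{}|\leq|\N{V}{}|+\bigl(|\A{W}{2}|-|\N{W}{2}|\bigr)\leq|\N{V}{}|,$$
where the final inequality follows by applying Lemma~\ref{lem:deg2} to each $v\in W$ and summing.

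For the equality characterization, the strict inequalities built into Lemmas~\ref{lem:deg2} and~\ref{lem:kstem} drive the classification. Since $(v,V-v)\in\N{v}{2}$ for every $v\in W$, the strict bound $(2^{\deg v}-\deg v-1)|\A{v}{2}|<|\N{v}{2}|$ gives $|\A{v}{2}|<|\N{v}{2}|$ whenever $v\in W$, so a nonempty $W$ makes the overall inequality strict; equality therefore forces $W=\emptyset$. Similarly, Lemma~\ref{lem:kstem} is strict for $k$-stems with $k\geq 3$, so equality additionally forces every stem to have at most two leaves. These two conditions together say exactly that every vertex of $G$ is a leaf or a $k$-stem with $k\leq 2$, i.e.\ $G$ is star-like; the converse direction is Observation~\ref{obs:StarLike}.

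The main obstacle is the case analysis confirming that $V_I\setminus W$ contributes zero to $a_2$ and $N_2$ inside $\mathcal{X}_I$; this is what lets Lemma~\ref{lem:deg2} be used globally after summing over $I$, rather than per-$\mathcal{X}_I$, where its defining injection $(v,S)\mapsto(v,(S-v)\cup T)$ can leak out of $\mathcal{X}_I$ whenever $T$ introduces a stem $s\in I$ that was absent from $S$.
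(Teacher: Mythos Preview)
Your proof is correct and follows essentially the same route as the paper: the same partition $\mathcal{D}_G=\bigcup_I\mathcal{X}_I$, the same splitting $V=V_I\cup\bigcup_{s\in I}L[s]$, and the same three lemmas in the same roles. Your exposition is in fact slightly cleaner than the paper's in two places: you handle both the $a_2$ and $N_2$ contributions from $V_I\setminus W$ (the paper only argues the $a_2$ side, which suffices but is asymmetric), and you make explicit why Lemma~\ref{lem:deg2} must be summed globally over $\mathcal{D}_G$ rather than applied within each $\mathcal{X}_I$---the paper leaves this implicit.
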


\begin{proof}
We begin by showing $\avd(G) \leq \frac{2n}{3}$. By Observation \ref{obs:AvN} and Observation \ref{obs:NewNotation} it suffices to show that $|\A{V}{}| \leq |\N{V}{}|$. Let $T_2 \subseteq V$ be the collection of non-stem vertices with degree at least two. By Lemma \ref{lem:deg2} we have  that for each $v \in T_2$

$$(2^{\deg(v)}-\deg(v)-1) \cdot |\A{v}{2}| \leq |\N{v}{2}|.$$

\noindent Therefore $|\A{T_2}{2}| \leq |\N{T_2}{2}|$. Furthermore by Lemma \ref{lem:a1n1_v2} we have

$$|\A{T_2}{1}| < |\N{T_2}{1}|.$$

\noindent Therefore $|\A{T_2}{}| < |\N{T_2}{}|$. If $G$ has minimum degree $\delta \geq 2$, then $V=T_2$ and hence $|\A{V}{}| < |\N{V}{}|$.

Suppose $G$ has minimum degree $\delta=1$ with stems $s_1, s_2, \ldots, s_{\omega}$. Note that we can partition the vertex set of $G$ as follows

$$V = T_2 \cup L[s_1] \cup L[s_2] \cup  \cdots \cup L[s_{\omega}].$$

\noindent Recall that for $I \subseteq \{s_1, s_2, \ldots, s_{\omega}\}$, $\mathcal{X}_{I}$ denotes the collection of dominating sets $S$ such that $L[s] \not\subseteq S$ for every $s \in I$ and $L[s] \subseteq S$ for every $s \notin I$. Recall that the collection of all $\mathcal{X}_{I}$ partitions $\mathcal{D}_G$.

From Lemma \ref{lem:kstem} we have that for any $I \subseteq \{s_1, s_2, \ldots, s_{\omega}\}$ and $k$-stem $s \in I$ that

$$|\A[\mathcal{X}_{I}]{L[s_i]}{}|\leq |\N[\mathcal{X}_{I}]{L[s_i]}{}|,$$

\noindent Additionally, from Lemma \ref{lem:a1n1_v2} for any $I \subseteq \{s_1, s_2, \ldots, s_{\omega}\}$ and every dominating set $S \in \mathcal{D}_G$ we have

 $$|\A[\mathcal{X}_I]{V_I}{1}| \leq |\N[\mathcal{X}_I]{V_I}{1}|.$$

\noindent where $V_{I} = V - \bigcup\limits_{s \in I}L[s]$. Therefore it suffices to show $|\A[\mathcal{X}_I]{V_I}{2}| \leq |\N[\mathcal{X}_I]{V_I}{2}|$ for any $I \subseteq \{s_1, s_2, \ldots, s_{\omega}\}$.  Moreover as $|\A{T_2}{2}| < |\N{T_2}{2}|$ it suffices to show that for any $S \in \mathcal{X}_I$ and $s \in I$ that $a_2(S) \cap L[s] = \emptyset$. Let $S \in \mathcal{X}_I$ and $s \in I$. By the definition of $\mathcal{X}_I$, we have that $L[s] \subseteq S$. Thus for each leaf $ \ell \in L(s)$, we have that $S- \ell$ is still a dominating set. Therefore $\ell \notin a(S)$ and hence $\ell \notin a_2(S)$. To show a contradiction suppose that $s \in a_2(S)$. Then, by the same argument used at the beginning of Lemma \ref{lem:deg2}, $S-s$ dominates every vertex in $G$ except $s$. However, $L[s] \subseteq S$ so $s$ would be dominated by one of its leaf neighbours which is a contradiction. Therefore $s \notin a_2(S)$ and $a_2(S) \cap L[s] = \emptyset$.

Finally we show that $\avd(G) = \frac{2n}{3}$ if and only if $G$ is star-like. By Observation \ref{obs:StarLike} it suffices to that if $\avd(G) = \frac{2n}{3}$ then $G$ is star-like. So suppose $\avd(G) = \frac{2n}{3}$. It follows from the same argument in Observation \ref{obs:AvN} that $|\A{V}{}| = |\N{V}{}|$. As $|\A{T_2}{2}| < |\N{T_2}{2}|$ then it must be the case that $T_2 = \emptyset$. Thus every vertex in $G$ must either be a leaf or a $k$-stem. By Lemma \ref{lem:a1n1_v2} it follows that for any $k$-stem $s$ with $k \geq 3$ that $|\A[\mathcal{X}_{I}]{L[s]}{}| < |\N[\mathcal{X}_{I}]{L[s]}{}|$.
Therefore as $|\A{V}{}| = |\N{V}{}|$ then $k \leq 2$ and $G$ is star-like.
\end{proof}

Clearly, every dominating set must contain every isolated vertex. Therefore we get the following corollary for graphs with $r$ isolated vertices.

\begin{corollary}
\label{cor:2n3boundwithiso}
Let $G$ be a graph $n$ vertices. If $G$ has $r$ isolated vertices then $\avd(G) \leq \frac{2n+r}{3}$.
\end{corollary}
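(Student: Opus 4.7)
The plan is to reduce to the theorem by stripping away the isolated vertices of $G$. Let $I$ denote the set of isolated vertices of $G$ and let $H = G-I$ be the induced subgraph on the remaining $n-r$ vertices. Because every vertex of $I$ has no neighbours, no edge incident to $V(H)$ is lost when passing from $G$ to $H$, so every vertex of $H$ retains all of its $G$-neighbours. In particular, $H$ has no isolated vertices whenever $n>r$ (and note that $n-r \ne 1$, since a lone non-isolated vertex of $G$ would still need a non-isolated neighbour).

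The key step is to relate $\avd(G)$ to $\avd(H)$ via the map $\phi\colon \mathcal{D}_H \to \mathcal{D}_G$ defined by $\phi(S) = S \cup I$. Since each isolated vertex of $G$ dominates only itself, every dominating set of $G$ must contain $I$; conversely any set of the form $S \cup I$ with $S \in \mathcal{D}_H$ is a dominating set of $G$ because $I$ is disjoint from (and non-adjacent to) $V(H)$. Hence $\phi$ is a bijection with $|\phi(S)| = |S|+r$. Summing over $\mathcal{D}_H$ gives $\Gamma_G = \Gamma_H$ and $\Gamma'_G = \Gamma'_H + r\Gamma_H$, so
\[
\avd(G) = \frac{\Gamma'_H + r\Gamma_H}{\Gamma_H} = \avd(H) + r.
\]

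Now I apply the theorem. If $n > r$, then $H$ is a graph on $n-r \ge 2$ vertices with no isolated vertices, so the theorem yields $\avd(H) \le 2(n-r)/3$, and hence
\[
\avd(G) \le \frac{2(n-r)}{3} + r = \frac{2n+r}{3},
\]
as required. The corner case $n=r$ is handled directly: $G$ is edgeless, $V(G)$ is its unique dominating set, and $\avd(G) = n = (2n+r)/3$. There is no substantive obstacle in this argument; the only thing that requires a line of justification is the fact that $H$ inherits no new isolated vertices, which is immediate from the definition of an isolated vertex.
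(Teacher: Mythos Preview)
Your proof is correct and follows exactly the approach the paper intends: remove the $r$ isolated vertices (which must lie in every dominating set), observe that the remaining graph $H$ on $n-r$ vertices has no isolated vertices, and apply the theorem to $H$ to get $\avd(G)=\avd(H)+r\le \tfrac{2(n-r)}{3}+r=\tfrac{2n+r}{3}$. The paper states the corollary with only the one-line justification that every dominating set contains every isolated vertex, and your write-up simply fleshes out the bijection and the edge cases.
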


\section{An average version of Vizing's Conjecture}\label{sec:Vizing}
Theorem \ref{thm:2n/3} implies a Vizing-like bound for the average order of dominating sets. The Vizing conjecture \cite{VizingConjecture} is one of the longest standing conjectures regarding the domination number and is among the most important conjectures in all of Graph Theory~\cite{BonatoNowakowski2012}. Vizing's conjecture posits that for two graphs $G$ and $H$ that

$$\gamma(G \Box H) \geq \gamma(G)\gamma(H),$$

\noindent where $G \Box H$ denotes the Cartesian product of $G$ and $H$. We will now should this bound holds for the average order of dominating sets. For this we will require a result from \cite{2021Beaton} that for two graphs $G$ and $H$ we have that $\avd(G \cup H) = \avd(G) + \avd(H)$.

\begin{theorem}
\label{thm:VizingAvd}
Let $G$ and $H$ be non-empty graphs, then $\avd(G \Box H) > \avd(G)\avd(H)$

\end{theorem}

\begin{proof}

Let $G'$ and $H'$ be isolate-free graphs such that $G \cong G' \cup r_1 K_1$ and  $H \cong H' \cup r_2 K_1$ where $r_1$ and $r_2$ denote the number of isolated vertices in $G$ and $H$ respectively. Moreover let $n_1$ and $n_2$ denote the number of vertices of $G'$ and $H'$ respectively. Note that

$$G \Box H \cong (G' \Box H') \cup r_1 H' \cup r_2 G' \cup r_1r_2K_1.$$

\noindent As $\avd(G)$ is additive across components then we have $\avd(G)=\avd(G')+r_1$ and $\avd(H)=\avd(H')+r_2$. Moreover

\begin{align*}
\avd(G \Box H) =& \avd(G' \Box H') + r_1 \avd(H')+ r_2 \avd(G')+ r_1r_2,\text{ and} \\ \\
\avd(G)\avd(H) = &  \avd(G')\avd(H') + r_1 \avd(H')+ r_2 \avd(G')+ r_1r_2. \\
\end{align*}

\noindent Therefore it suffices to show that $\avd(G' \Box H') \geq  \avd(G')\avd(H')$. Note that the $G' \Box H'$ has $n_1n_2$ vertices and therefore $\avd(G' \Box H') \geq \frac{n_1n_2}{2}$. Moreover by Theorem \ref{thm:2n/3} we have that $\avd(G') \geq \frac{2n_1}{3}$ and $\avd(H') \geq \frac{2n_2}{3}$. Finally, as $G$ and $H$ where not empty, then $n_1,n_2>0$ and

 $$\avd(G' \Box H') \geq \frac{n_1n_2}{2} > \frac{4n_1n_2}{9} \geq \avd(G')\avd(H').$$
 
\noindent Therefore $\avd(G \Box H) > \avd(G)\avd(H)$.
\end{proof}

\section{Conclusion and Open Problems}\label{sec:conclusion}
This paper has been devoted to proving Conjecture~\ref{conj:2n/3}, and much of our work involved special handling of graphs with leaves. It is therefore reasonable to wonder what happens with $\avd(G)$ when $G$ does not have leaves, and more generally we bound the minimum degree.
\begin{problem}
Classify which graphs maximize $\avd(G)$ among all graphs of fixed order with $\delta\ge k$ for all $k\ge 2$.
\end{problem}
For graphs of small order, the graph with minimum degree at least two which maximizes $\avd(G)$ appears to be 2-regular. Particularly, for $3 \leq n \leq 7$, the graph $C_n$ maximizes $\avd(G)$ among graphs with $\delta(G) \geq 2$. However, for $n=8$ and $n=9$ it is $C_4 \cup C_4$ and $C_4 \cup C_5$ respectively which maximize $\avd(G)$. Moreover for graphs larger minimum degree, a computer search on $n \leq 9$ showed that the graphs which maximize $\avd(G)$ among those with minimum degree $\delta \geq k$ where $k$-regular (when possible). Surprisingly, connected graphs with $\delta \geq k$ do not follow this pattern. For $n=8$ the graph shown Figure \ref{fig:maxconnected} maximizes $\avd(G)$ among connected graphs with $\delta(G) \geq 2$.

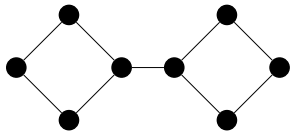
\begin{figure}
[h]
\def\c{0.7}
\def\h{2}
\centering
\scalebox{\c}{
\begin{tikzpicture}
\begin{scope}[every node/.style={circle,thick,draw,fill}]
    \node (1) at (0,0) {}; 
    \node (2) at (1,1) {}; 
    \node (3) at (2,0) {}; 
    \node (4) at (1,-1) {}; 
    \node (5) at (-1,0) {}; 
    \node (6) at (-2,1) {};
    \node (7) at (-3,0) {};
    \node (8) at (-2,-1) {};
    
\end{scope}

\begin{scope}
    \path [-] (1) edge node {} (2);
	\path [-] (2) edge node {} (3);
	\path [-] (3) edge node {} (4);
	\path [-] (4) edge node {} (1);
	\path [-] (1) edge node {} (5);
	\path [-] (5) edge node {} (6);
	\path [-] (6) edge node {} (7);
	\path [-] (7) edge node {} (8);
	\path [-] (8) edge node {} (5);
\end{scope}
\end{tikzpicture}
}
\caption{The graph which maximizes $\avd(G)$ among connected graphs of order 8 and $\delta(G) \geq 2$}\label{fig:maxconnected}
\end{figure}

%
%

A long-standing conjecture related to dominating sets is that the domination polynomial whose coefficients are the numbers of dominating sets of each cardinality is unimodal (that is, first non-decreasing and then non-increasing) for all graphs~\cite{2014Alikhani}.
We remark that when the domination polynomial has all real roots, the average order of domination sets of graph $G$ can also determine the mode of the coefficients of the domination polynomial $D(G,x)$; Darroch \cite{1964Darroch} showed in general that a real-rooted polynomial $f(x) = a_0 + a_1x + \cdots + a_nx^n$ with positive coefficients has its mode at either $\left\lfloor \frac{f'(1)}{f(1)} \right\rfloor$ or $\left\lceil \frac{f'(1)}{f(1)} \right\rceil$. Since $\avd(G) = \frac{D'(G,1)}{D(G,1)}$, it follows that if $D(G,x)$ has all real roots then its mode is at either $\left\lfloor \avd(G) \right\rfloor$ or $\left\lceil \avd(G) \right\rceil$. While the roots of domination polynomials are not always real and are in fact dense in the complex plane~\cite{2014BrownTufts}, Oboudi \cite{2015Oboudi} conjectured that star-like graphs were the only domination polynomials with all real roots. The connection between the mode of the domination polynomial of $G$ and $\avd(G)$ together with computational evidence leads us to pose the following conjecture.

\begin{conjecture}
For each order $n$, there is a star-like graph whose domination polynomial has its mode at the largest index among all graphs of order $n$ without isolated vertices.
\end{conjecture}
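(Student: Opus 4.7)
The plan is to combine the preceding lemmas into a single accounting argument. By Observation \ref{obs:AvN} and Observation \ref{obs:NewNotation}, it suffices to establish $|\A{V}{}| \leq |\N{V}{}|$, and to isolate exactly when equality can hold. First I would partition $V$ into the set $T_2$ of non-stem vertices of degree at least two and the stem-leaf clusters $L[s_1], \dots, L[s_\omega]$, using additivity of the counts $\A[\mathcal{Y}]{X}{i}$ and $\N[\mathcal{Y}]{X}{i}$ across disjoint vertex subsets. For the $T_2$ block, Lemma \ref{lem:deg2} directly yields $|\A{v}{2}| \le |\N{v}{2}|$ for every $v \in T_2$, with strict inequality whenever $T_2$ is nonempty.

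For the stem-leaf blocks, I would further decompose $\mathcal{D}_G$ into the classes $\mathcal{X}_I$, so that the counts also split additively across this partition of dominating sets. For stems $s \notin I$, the inclusion $L[s]\subseteq S$ forces $L[s]$ to contribute nothing to $a(S)$ or $N(S)$ for $S \in \mathcal{X}_I$; for stems $s \in I$, Lemma \ref{lem:kstem} bounds the $L[s]$-contribution on $\mathcal{X}_I$, with equality only when the stem is a $k$-stem with $k \leq 2$. Finally, Lemma \ref{lem:a1n1_v2} supplies the $a_1$/$N_1$ bound on the residual set $V_I = V - \bigcup_{s \in I} L[s]$ within each $\mathcal{X}_I$. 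Summing over $I$ and over the two vertex-type blocks produces the desired inequality.

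The main obstacle I anticipate is verifying that the $a_2$ contributions from the $L[s]$ blocks do not leak into the $V_I$ accounting. Concretely, for $S \in \mathcal{X}_I$ and $s \in I$ I would need to show $a_2(S) \cap L[s] = \emptyset$: leaves in $L(s) \cap S$ cannot be critical because $s$, present in $S$, still dominates every leaf in $L(s)$, and $s$ itself cannot belong to $a_2(S)$ because, repeating the opening observation of Lemma \ref{lem:deg2}, if $s \in a_2(S)$ then $S - s$ would dominate every vertex except $s$, yet any leaf in $L(s) \cap S$ already dominates $s$, a contradiction. Once this bookkeeping lemma is in place, the three inequalities combine cleanly to give $|\A{V}{}| \leq |\N{V}{}|$.

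For the equality classification I would trace back which inequalities must be tight when $\avd(G) = 2n/3$. Strictness in Lemma \ref{lem:deg2} forces $T_2 = \emptyset$, so every vertex of $G$ is either a leaf or a stem; strictness of the $k \geq 3$ case in Lemma \ref{lem:kstem} then forces every stem to be a $k$-stem with $k \leq 2$, which is precisely the definition of star-like. Observation \ref{obs:StarLike} supplies the converse, completing the characterization. The deepest step is the $\mathcal{X}_I$-level accounting on stem-leaf clusters; the degree-at-least-two case and the final extremal deduction are then largely bookkeeping.
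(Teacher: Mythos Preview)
The statement you were asked to address is an \emph{open conjecture} posed in the paper's conclusion; the paper does not prove it and offers no argument beyond computational evidence and a heuristic link (via Darroch's theorem) between $\avd(G)$ and the mode of $D(G,x)$ when the domination polynomial is real-rooted. There is therefore nothing in the paper's ``own proof'' to compare against.

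Your proposal does not touch this conjecture at all. What you have written is an outline of the proof of the paper's main theorem, namely that $\avd(G)\le 2n/3$ for graphs without isolated vertices, with equality exactly for star-like graphs. As an outline of \emph{that} theorem your plan matches the paper's argument essentially step for step: the partition $V = T_2 \cup L[s_1]\cup\cdots\cup L[s_\omega]$, the decomposition of $\mathcal{D}_G$ into the classes $\mathcal{X}_I$, the use of Lemma~\ref{lem:deg2} on $T_2$, Lemma~\ref{lem:kstem} on each $L[s]$ with $s\in I$, Lemma~\ref{lem:a1n1_v2} for the $a_1/N_1$ part on $V_I$, and the verification that $a_2(S)\cap L[s]=\emptyset$ for the remaining stem blocks. (One minor slip: in that last verification the relevant stems are those with $s\notin I$, since $V_I$ consists of $T_2$ together with the $L[s]$ for $s\notin I$; your description has $s\in I$, but the argument you give---``$s$, present in $S$''---is the $s\notin I$ case. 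The paper contains the same typographical inconsistency.)

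None of this, however, says anything about the location of the mode of $D(G,x)$, let alone compares modes across all graphs of a fixed order. Establishing the conjecture would require entirely different tools: control over the shape of the coefficient sequence $(d_k(G))_k$, not merely its first moment. Your proposal simply addresses the wrong statement.
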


Lastly, Theorem  \ref{thm:VizingAvd} states that for two non-empty graphs $\avd(G \Box H) > \avd(G)\avd(H)$. This leads us to wonder if we can find upper and lower bounds for $\avd(G)$ in terms of $\gamma(G)$. Clearly $\avd(G) \geq \gamma(G)$ but this is only tight for $K_n$ and $\overline{K_n}$. Tighter bounds could be used to compare $\gamma(G \Box H)$ and  $\gamma(G)\gamma(H)$ and hence make progress on Vizing's Conjecture.

\begin{problem}
Find tight upper and lower bounds for $\avd(G)$ in terms of $\gamma(G)$.
\end{problem}

\section*{Acknowledgements}
 
\noindent Ben Cameron acknowledges research support from the Natural Sciences and Engineering Research Council of Canada (NSERC) (grants DGECR-2022-00446 and RGPIN-2022-03697)


\bibliographystyle{abbrv}
\bibliography{MDO}

\end{document}